\documentclass[12pt]{article}

\usepackage[utf8]{inputenc}
\usepackage[T1]{fontenc}

\usepackage{amsmath}
\usepackage{amssymb}
\usepackage{amsfonts}
\usepackage{mathtools}
\usepackage{bbold}

\usepackage{graphicx}
\graphicspath{{images/}}

\usepackage{booktabs}
\usepackage{float}
\usepackage{tikz}
\usetikzlibrary{arrows.meta}

\usepackage[ruled,longend]{algorithm2e}
\usepackage{pseudocode}

\usepackage{setspace}
\usepackage{parskip}
\usepackage{enumitem}
\usepackage{microtype}
\usepackage{bbm}

\usepackage{xcolor}
\usepackage[linkbordercolor=white]{hyperref}
\usepackage{url}
\usepackage{doi}

\usepackage{arxiv}

\title{Zero-Sum Two-Player Differential Game under Three Regimes}

\author{
    {\includegraphics[scale=0.06]{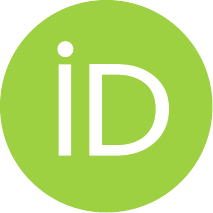}\hspace{1mm}Brahim EL ASRI}\\
    Équipe Aide à la Décision\\
    ENSA -- Université Ibn Zohr\\
    BP 1136, Morocco\\
    \texttt{b.elasri@uiz.ac.ma}
    \And
    {\includegraphics[scale=0.06]{orcid.pdf}\hspace{1mm}Magnoudéwa PAKA}\\
    Équipe Aide à la Décision\\
    ENSA -- Université Ibn Zohr\\
    BP 1136, Morocco\\
    \texttt{magnoudewa.paka@edu.uiz.ac.ma}
}

\renewcommand{\headeright}{}
\renewcommand{\undertitle}{}
\renewcommand{\shorttitle}{}

\hypersetup{
    pdftitle={Zero-Sum Two-Player Differential Game under Three Regimes},
    pdfauthor={Brahim EL ASRI, Magnoudéwa PAKA},
    pdfsubject={Differential Games},
    pdfkeywords={
        differential game,
        optimal switching,
        viscosity solutions,
        quasi-variational inequalities
    }
}

\begin{document}

\hypersetup{pdfborder=0 0 0}
\maketitle
\newtheorem{theo}{Theorem}[section]
\newtheorem{problem}{Problem}[section]
\newtheorem{pro}{Proposition}[section]
\newtheorem{cor}{Corollary}[section]
\newtheorem{axiom}{Definition}[section]
\newtheorem{rem}{Remark}[section]
\newtheorem{lem}{Lemma}[section]
\newtheorem{ex}{Example}[section]
\newtheorem{proof}{Proof}[section]
\newtheorem{Heuristics}{Heuristics}
\newtheorem{ass}{Assumption}[section]

\newcommand{\bass}{\begin{ass}}
\newcommand{\eass}{\end{ass}}
\newcommand{\bpf}{\begin{proof}}
\newcommand{\epf}{\end{proof}}
\newcommand{\brm}{\begin{rem}}
\newcommand{\erm}{\end{rem}}
\newcommand{\bethe}{\begin{theo}}
\newcommand{\eethe}{\end{theo}}
\newcommand{\bl}{\begin{lem}}
\newcommand{\el}{\end{lem}}
\newcommand{\bp}{\begin{pro}}
\newcommand{\ep}{\end{pro}}
\newcommand{\bcor}{\begin{cor}}
\newcommand{\ecor}{\end{cor}}
\newcommand{\be}{\begin{equation}}
\newcommand{\ee}{\end{equation}}
\newcommand{\beq}{\begin{eqnarray*}}
\newcommand{\eeq}{\end{eqnarray*}}
\newcommand{\beqa}{\begin{eqnarray}}
\newcommand{\eeqa}{\end{eqnarray}}
\newcommand{\dg}{\displaystyle \delta}
\newcommand{\cm}{\cal M}
\newcommand{\cF}{{\cal F}}
\newcommand{\cR}{{\cal R}}
\newcommand{\bF}{{\bf F}}
\newcommand{\tg}{\displaystyle \theta}
\newcommand{\w}{\displaystyle \omega}
\newcommand{\W}{\displaystyle \Omega}
\newcommand{\vp}{\displaystyle \varphi}
\newcommand{\ig}[2]{\displaystyle \int_{#1}^{#2}}
\newcommand{\integ}[2]{\displaystyle \int_{#1}^{#2}}
\newcommand{\produit}[2]{\displaystyle \prod_{#1}^{#2}}
\newcommand{\somme}[2]{\displaystyle \sum_{#1}^{#2}}
\newlength{\inter}
\setlength{\inter}{\baselineskip}
\setlength\parindent{24pt}
\setlength{\baselineskip}{7mm}
\newcommand{\no}{\noindent}
\newcommand{\rw}{\rightarrow}
\def \ind{1\!\!1}
\def \R{I\!\!R}
\def \N{I\!\!N}
\def \cadlag {{c\`adl\`ag}~}
\def \esssup {\mbox{ess sup}}

\hypersetup{pdfborder=0 0 0}

\maketitle


\onehalfspacing


\begin{abstract}
This paper investigates a two-player zero-sum stochastic differential game characterized by three distinct regimes, reflecting the regime-switching dynamics within the system. We aim to derive an explicit solution for a number of configurations of the switching system by means of the viscosity solutions approach. In particular, we analyze the associated Hamilton--Jacobi--Bellman--Isaacs equations and examine how the interactions between the different regimes affect the structure of the value function. The proposed framework provides a systematic approach to characterize the optimal strategies of the two players and to obtain explicit representations of the value function under suitable assumptions. We also illustrate how to derive the value functions in case we know the qualitative structure of switching regions.
\end{abstract}

\keywords{Differential game \and optimal switching \and quasi-variational inequalities \and value function \and viscosity solutions}

\noindent\textbf{AMS Classification:}
60G40; 62P20; 91B99; 91B28; 35B37; 49L25.


\section{Introduction}

In recent decades, stochastic differential games have attracted considerable attention due to their numerous applications in economics, finance, engineering, and operational research. Among these problems, zero-sum two-player switching games constitute an important class of stochastic control problems in which competing players influence the evolution of a system by switching between different operational regimes. Such models combine the strategic interactions of differential games with the flexibility of regime-switching control, providing a natural framework for studying decision-making under uncertainty when switching actions incur costs.

A zero-sum switching game describes a competitive situation in which the gain of one player is exactly the loss of the other. In the stochastic setting, the players' decisions affect both the current payoff and the future evolution of the system, leading to highly coupled optimization problems. The strategic complexity increases significantly when several operating regimes are available, since each player must determine not only the optimal timing of a switch but also the most advantageous regime to select while anticipating the opponent's actions.

The theory of regime-switching models originated with the pioneering work of Howard \textcolor{blue}{\cite{RAH}} and was further developed by Hamilton \cite{HAM} for the analysis of economic and financial systems exhibiting structural changes. Since then, regime-switching models have become a standard tool for describing systems whose dynamics evolve according to a finite collection of states. When combined with stochastic control and game-theoretic considerations, they give rise to switching games in which players modify the regime of the system through costly interventions.

Switching control and switching games have been extensively studied in the literature. Important contributions include the works of Bensoussan and Lions \textcolor{blue}{\cite{BENL}}, Øksendal and Sulem \textcolor{blue}{\cite{OS}}, Pham \textcolor{blue}{\cite{PVT}}, and Bayraktar and Yao \textcolor{blue}{\cite{BayYao}}, among others. In general, the value functions of these games are characterized as viscosity solutions of systems of variational or quasi-variational inequalities, whose analysis relies on sophisticated techniques from stochastic control theory. Existence and uniqueness results have been established in a broad setting, and several qualitative properties of the associated Nash equilibria have been investigated.

Despite these advances, obtaining explicit solutions remains a difficult task. Most existing works focus on the characterization of the value function through variational inequalities, while closed-form analytical solutions are available only in a limited number of special cases. The difficulty increases rapidly as the number of regimes grows because the number of possible switching decisions and the interactions between the players become considerably more intricate.

In \textcolor{blue}{\cite{MP-BA}}, we obtained an explicit solution for a zero-sum two-player switching game in the case of two regimes, providing an analytical characterization of the value functions and the corresponding optimal switching strategies. The present paper extends that analysis to the considerably more challenging case of three regimes. The transition from two to three regimes substantially enriches the strategic structure of the game by introducing additional switching possibilities and more complex interactions between the players.

The principal novelty of this paper is the derivation of an explicit solution for the three-regime zero-sum two-player switching game. To the best of our knowledge, explicit analytical solutions for this class of problems are scarce in the literature due to the significant mathematical complexity generated by the increased number of regimes. Our results show that moving beyond the two-regime setting gives rise to a richer strategic structure, in which the interaction between multiple switching opportunities fundamentally shapes the equilibrium strategies. The three-regime framework therefore provides new structural insights into the geometry of the switching regions and the resulting equilibrium policies, while substantially broadening the scope of the explicit analysis.

From an application point of view, the three-regime model is relevant in numerous contexts. In finance, it can describe markets evolving through bearish, neutral, and bullish phases. In economics, it models strategic decisions under different macroeconomic environments, while in engineering it applies to systems operating under several modes with different performance characteristics. The additional regime allows for a richer representation of realistic systems and more flexible modeling of strategic interactions.

The remainder of the paper is organized as follows. Section~2 introduces the mathematical framework and formulates the zero-sum switching game together with the standing assumptions. Section~3 derives the associated quasi-variational inequalities and characterizes the switching regions. Section~4 presents the explicit solution of the three-regime switching game and establishes the corresponding equilibrium strategies, in section 5 we give a numerical procedure to compute the value function
in case we know the qualitative structure of the switching regions and in section 6 we illustrate our results by numerical simulations.

\section{Problem formulation and Assumptions}

\setcounter{equation}{0}
\renewcommand{\theequation}{2.\arabic{equation}}

\subsection{Problem formulation}

The problem consists of a two-player game. The first player (Player I) and the second player (Player II) both make decisions based on the value of a state process $X := (X_t)_{t \geq 0}$.

Each player operates under several modes or regimes. Let
$\mathcal{D} := \{1,2,\dots,d\}$ denote the finite set of regimes, where each regime is identified by an index in $\{1,\dots,d\}$.

The switching costs are known from both players, and at any given time, each player knows the current regime of his counterpart. 

The regime configuration of both players is modeled by a two-dimensional càdlàg process $\textit{I}_t$ taking values in $\mathcal{D} \times \mathcal{D}$. For each value of $\textit{I}_t$, there is an associated running profit function $f_{\textit{I}_t}$, assumed to be non-negative.

The state process $X$ represents the price of a commodity and thus takes values in $\mathbb{R}_+^*$.

We denote by $J$ the payoff of the game. Player I aims to maximize $J$ by choosing a switching strategy $\xi := (\xi_n)_{n \geq 1}$ at stopping times $\tau := (\tau_n)_{n \geq 1}$, which form an increasing sequence of stopping times. Player II aims to minimize $J$ by choosing a switching strategy $\eta := (\eta_m)_{m \geq 1}$ at stopping times $\rho := (\rho_m)_{m \geq 1}$.

Switching from regime $i$ to regime $j$ incurs a cost $c_{ij}$ for Player I and $\chi_{ij}$ for Player II.

The payoff of the game is given by
\begin{equation}
\label{eq0}
J(x,\xi,\eta)
=
\mathbb{E}\Bigg[
\int_{0}^{\infty} e^{-rs}
f\big(X^{\mu_s,\nu_s}_{s}, \mu_s, \nu_s\big)\,ds
- \sum_{n \geq 1} e^{-r\tau_n} C(\xi_{n-1},\xi_n)
+ \sum_{m \geq 1} e^{-r\rho_m} \chi(\eta_{m-1},\eta_m)
\Bigg],
\end{equation}
where $r > 0$ is the discount rate.

The regime processes of the players are defined by
\[
\mu_t = \sum_{n \geq 1} \xi_n \mathbf{1}_{\{\tau_n \leq t < \tau_{n+1}\}},
\qquad
\nu_t = \sum_{m \geq 1} \eta_m \mathbf{1}_{\{\rho_m \leq t < \rho_{m+1}\}}.
\]

Hence, by definition, we have $\textit{I}_t = (\mu_t,\nu_t)$.

\subsubsection*{State dynamics}

In this work, the process $X_t$ follows a regime-switching geometric Brownian motion to ensure positivity, and is given by
\begin{equation}
\label{azz}
dX_t
=
b(X_t,\mu_t,\nu_t)\,dt
+
\sigma(X_t,\mu_t,\nu_t)\,dW_t,
\qquad
X_0 = x,
\qquad
\textit{I}_{0^-} = (i,j),
\end{equation}
where
\[
b(X_t,\mu_t,\nu_t) = b_{\textit{I}_t} X_t,
\qquad
\sigma(X_t,\mu_t,\nu_t) = \sigma_{\textit{I}_t} X_t.
\]

Throughout the paper, we refer to $b_{\textit{I}_t}$ and $\sigma_{\textit{I}_t}$ as diffusion coefficients. We also denote by $X^{x}_{I_0}$ (or simply $X^x$) the solution of the stochastic differential equation.

\subsection{Assumptions}

Throughout the paper, $d$ is a positive integer and $\mathcal{D} = \{1,\dots,d\}$.

\textbf{[H1]} The functions
$b:\mathbb{R}^+ \times \mathcal{D} \times \mathcal{D} \to \mathbb{R}^+$ and
$\sigma:\mathbb{R}^+ \times \mathcal{D} \times \mathcal{D} \to \mathbb{R}^+$
are continuous and satisfy the Lipschitz condition: there exists $C > 0$ such that for all $x,x' \in \mathbb{R}^+$ and $i,j \in \mathcal{D}$,
\begin{equation}
|\sigma(x,i,j) - \sigma(x',i,j)|
+
|b(x,i,j) - b(x',i,j)|
\leq
C|x - x'|.
\label{eqs}
\end{equation}
Moreover, they satisfy the linear growth condition:
\[
|\sigma(x,i,j)| + |b(x,i,j)| \leq C(1+|x|).
\]

\textbf{[H2]} The function
$f:\mathbb{R}^+ \times \mathcal{D} \times \mathcal{D} \to \mathbb{R}^+$
is continuous and satisfies: there exists $C > 0$ such that for all $x,x' \in \mathbb{R}^+$ and $i,j \in \mathcal{D}$,
\begin{equation}
|f(x,i,j)| \leq C(1+|x|),
\qquad
|f(x,i,j) - f(x',i,j)| \leq C|x-x'|.
\end{equation}

\textbf{[H3]} For all $i,j \in \mathcal{D}$, the switching costs $c_{ij}$ and $\chi_{ij}$ are constants satisfying the triangular condition:
\begin{equation}
c_{ik} < c_{ij} + c_{jk},
\qquad
\chi_{ik} < \chi_{ij} + \chi_{jk},
\qquad j \neq i,k.
\label{co1}
\end{equation}
This ensures that direct switching is cheaper than indirect switching through an intermediate regime.

Note that switching costs may be negative, but condition \eqref{co1} prevents arbitrage by repeated switching. In particular,
\begin{equation}
0 < c_{ij} + c_{ji},
\qquad
0 < \chi_{ij} + \chi_{ji}.
\label{co3}
\end{equation}

We may alternatively write $c(i,j)$ instead of $c_{ij}$, and $f(x,i,j)$ instead of $f_{ij}(x)$.

\textbf{[H4]} The discount rate $r$ satisfies
\[
r > \max_{i,j \in \mathcal{D}} b_{ij}.
\]
This assumption is mainly technical and ensures integrability properties needed for the explicit solution.

\textbf{[H5]} The sequences of cumulative switching costs
\[
c_n = \sum_{m=1}^{n} e^{-r\tau_m} c(\xi_{m-1},\xi_m),
\qquad
\chi_n = \sum_{l=1}^{n} e^{-r\rho_l} \chi(\eta_{l-1},\eta_l),
\]
converge almost surely, and their limits satisfy
\begin{equation}
\lim_{n \to \infty} c_n \in L^1,
\qquad
\lim_{n \to \infty} \chi_n \in L^1.
\label{L1}
\end{equation}
\subsection{Preliminary Results}

\subsubsection{Existence of solutions for the SDE}

Under assumption \textbf{[H1]}, the stochastic differential equation \eqref{azz} admits a unique strong solution.

\subsubsection{Viscosity solutions}

Let $F$ be a continuous function such that
\begin{equation}
\label{eq}
F(x,v,D_x v, D^2_{xx} v) = 0,
\end{equation}
where $D_x v$ and $D^2_{xx} v$ denote respectively the first and second derivatives with respect to $x$.

We assume that $F$ is nonincreasing with respect to its last argument, and that $x$ belongs to an open subset $\mathcal{O} \subset \mathbb{R}^+$.

\begin{axiom}[Viscosity solution]

\textbf{(i) Supersolution.}
A function $v$ is a viscosity supersolution if, for any $\bar{x} \in \mathcal{O}$ and any test function $\phi \in C^2$ such that $v - \phi$ attains a local minimum at $\bar{x}$, one has
\[
F(\bar{x}, v(\bar{x}), D_x \phi(\bar{x}), D^2_{xx} \phi(\bar{x})) \geq 0.
\]

\textbf{(ii) Subsolution.}
A function $v$ is a viscosity subsolution if, for any $\bar{x} \in \mathcal{O}$ and any $\phi \in C^2$ such that $v - \phi$ attains a local maximum at $\bar{x}$, one has
\[
F(\bar{x}, v(\bar{x}), D_x \phi(\bar{x}), D^2_{xx} \phi(\bar{x})) \leq 0.
\]

\textbf{(iii) Solution.}
A function is a viscosity solution if it is both a subsolution and a supersolution.

Equivalently, let $J^{2,+}v(x)$ and $J^{2,-}v(x)$ denote respectively the superjets and subjets of $v$ at $x$, defined by pairs $(q,X)$ such that
\[
v(y) \leq v(x) + \langle q, y-x \rangle + \frac{1}{2}\langle X(y-x), y-x \rangle + o(|y-x|^2),
\]
(resp.
\[
v(y) \geq v(x) + \langle q, y-x \rangle + \frac{1}{2}\langle X(y-x), y-x \rangle + o(|y-x|^2)).
\]

Then:
\[
v \text{ is a supersolution } \Longleftrightarrow
F(x,v,q,X) \geq 0 \quad \forall (q,X) \in J^{2,-}v(x),
\]
\[
v \text{ is a subsolution } \Longleftrightarrow
F(x,v,q,X) \leq 0 \quad \forall (q,X) \in J^{2,+}v(x).
\]

\end{axiom}

\subsubsection{Existence of the value function}

Let us define an admissible strategy in the two-player game.

\begin{axiom}[Admissible strategy]

Let $(\Omega,\mathcal{F},\mathbb{P})$ be a probability space supporting a standard $d$-dimensional Brownian motion $W=(W_t)_{t \geq 0}$, with natural filtration
\[
\mathcal{F}_t^0 := \sigma(W_s, s \leq t),
\quad
\mathbb{F} = (\mathcal{F}_t)_{t \geq 0}
\]
being the completed filtration.

We consider two players, I and II, acting on the system through switching strategies.

An admissible strategy for Player I is a sequence
\[
\delta = (\tau_m, \xi_m)_{m \geq 0},
\]
and for Player II:
\[
\nu = (\rho_n, \eta_n)_{n \geq 0},
\]
where:

\textbf{(i)} $(\tau_m)$ and $(\rho_n)$ are increasing sequences of stopping times such that $\tau_m \to \infty$ and $\rho_n \to \infty$,

\textbf{(ii)} $(\xi_m)$ and $(\eta_n)$ are $\mathcal{D}$-valued random variables such that
$\xi_m$ is $\mathcal{F}_{\tau_m}$-measurable and $\eta_n$ is $\mathcal{F}_{\rho_n}$-measurable.

\end{axiom}

We now define the upper and lower value functions.

\begin{axiom}[Upper and lower value functions]

Let $\mathcal{A}^i$ (resp. $\mathcal{B}^j$) denote the set of admissible strategies of Player I (resp. Player II).

A nonanticipative strategy for Player I is a mapping
\[
\vartheta : \bigcup_{j \in \mathcal{D}} \mathcal{B}^j \to \mathcal{A}^i,
\]
such that if two strategies $b,b'$ coincide up to a stopping time $\tau$, then $\vartheta(b)$ and $\vartheta(b')$ also coincide up to $\tau$.

Similarly, a nonanticipative strategy for Player II is a mapping
\[
\varrho : \bigcup_{i \in \mathcal{D}} \mathcal{A}^i \to \mathcal{B}^j
\]
satisfying the analogous property.

Let $\Gamma^i$ and $\Delta^j$ denote the sets of nonanticipative strategies.

The upper value is
\[
\overline{V}_{ij}
=
\inf_{\varrho \in \Delta^j}
\sup_{\varphi \in \mathcal{A}^i}
J(x,\varphi,\varrho(\varphi)),
\]
and the lower value is
\[
\underline{V}_{ij}
=
\sup_{\varphi \in \Gamma^i}
\inf_{\beta \in \mathcal{B}^j}
J(x,\varphi(\beta),\beta).
\]

\end{axiom}

\bpf
We refer to \cite{MP-BA} for further properties of the value functions.

\epf
\section{System of quasi-variational inequalities and switching regions}

We now state the Hamilton–Jacobi–Bellman–Isaacs (HJBI) system of quasi-variational inequalities.

Using the dynamic programming principle, the value function satisfies:
\begin{equation}
\label{qv}
\max \Big\{
\min \big[
r v_{ij}(x) - \mathcal{L}_{ij} v_{ij}(x) - f_{ij}(x),
\; v_{ij}(x) - M_{ij}[v](x)
\big],
\;
v_{ij}(x) - N_{ij}[v](x)
\Big\}
= 0.
\end{equation}

\begin{equation}
\label{qv2}
\min \Big\{
\max \big[
r v_{ij}(x) - \mathcal{L}_{ij} v_{ij}(x) - f_{ij}(x),
\; v_{ij}(x) - N_{ij}[v](x)
\big],
\;
v_{ij}(x) - M_{ij}[v](x)
\Big\}
= 0.
\end{equation}

where
\[
M_{ij}[v](x) = \max_{k \neq i} \{ v_{kj}(x) - c_{ik} \},
\qquad
N_{ij}[v](x) = \min_{l \neq j} \{ v_{il}(x) + \chi_{jl} \},
\]
and
\[
\mathcal{L}_{ij} v_{ij}(x)
=
\frac{1}{2} \mathrm{Tr}\big[\sigma_{ij}\sigma_{ij}^* D^2 v_{ij}(x)\big]
+
\langle b_{ij}, D v_{ij}(x) \rangle.
\]


The viscosity sub/supersolution definitions follow from standard arguments and are omitted here for brevity.

\bp

\noindent The upper and lower value functions coincide, and the value function of the stochastic differential game is given by
\[
V_{ij}(x) = \underline{V}_{ij}(x) = \overline{V}_{ij}(x),
\quad
\forall i,j \in \mathcal{D},\; x \in \mathbb{R}^+.
\]
Consequently, \eqref{qv} and \eqref{qv2} admit a unique continuous solution of polynomial growth.

\ep

\begin{proof}
We refer the reader to \textcolor{blue}{\cite{BM}}.\hfill $\Box$
\end{proof}


\noindent We now introduce the switching and continuation regions.

For Player I:
\[
S^\xi_i. =
\{ x > 0 : V_i.(x) = \max_{k \in \mathcal{D} \setminus \{i\}} (V_k.(x) - c_{ik}) \},
\]
\[
C^\xi_i. =
\{ x > 0 : V_i. (x) > \max_{k \in \mathcal{D} \setminus \{i\}} (V_k.(x) - c_{ik}) \}.
\]

For Player II:
\[
S^\eta_j. =
\{ x > 0 : V_j.(x) = \min_{l \in \mathcal{D} \setminus \{j\}} (V_l.(x) + \chi_{jl}) \},
\]
\[
C^\eta_j. =
\{ x > 0 : V_j. (x) < \min_{l \in \mathcal{D} \setminus \{j\}} (V_l.(x) + \chi_{jl}) \}.
\]

\brm
The smooth-fit property (see \cite{P}) still holds in this framework: the value function is $C^1$ on the boundaries of switching regions and $C^2$ inside continuation regions.
\erm

We finally define
\[
\mathcal{H}_{ij}[v](x) = r v(x) - \mathcal{L}_{ij} v(x) - f_{ij}(x).
\]

\bl
\label{lem:lemma3}
Let $C, S^{\xi}, S^{\eta} \subset \mathbb{R}^*_+$ be such that
\[
C = \mathbb{R}^*_+ \setminus (S^{\xi}\cup S^{\eta}),
\]
where $S^{\xi}\cup S^{\eta}$ is a union of closed subsets of $\mathbb{R}^*_+$. Fix $i,j\in\mathcal{D}$.

Let $\mathcal{Z}, h^{\xi}, h^{\eta}$ be three continuous functions on $\mathbb{R}^*_+$ such that:
\begin{itemize}
\item $\mathcal{Z}\in C^2(C)$, and $h^{\xi} \le \mathcal{Z} \le h^{\eta}$ on $C$, where $\mathcal{Z}$ solves
\begin{equation}
\label{lmeq}
\mathcal{H}^{ij}[\mathcal{Z}](x)=0.
\end{equation}

\item $\mathcal{Z}=h^{\xi}$ on $S^{\xi}$, and $\mathcal{Z}\le h^{\eta}$ on $S^{\eta}$. Moreover, $\mathcal{Z}\in C^1$ on $\partial S^{\xi}$, and $\mathcal{Z}$ is a viscosity supersolution of
\[
\mathcal{H}^{ij}[\mathcal{Z}](x)\ge 0 \quad \text{in } \operatorname{int}(S^{\xi}).
\]

\item $\mathcal{Z}=h^{\eta}$ on $S^{\eta}$, and $\mathcal{Z}\in C^1$ on $\partial S^{\eta}$. Moreover, $\mathcal{Z}$ is a viscosity subsolution of
\[
\min\big\{\mathcal{H}^{ij}[\mathcal{Z}](x),\, \mathcal{Z}(x)-h^{\xi}(x)\big\}\le 0
\quad \text{in } \operatorname{int}(S^{\eta}).
\]
\end{itemize}

Then $\mathcal{Z}$ is a viscosity solution of
\begin{equation}
\label{vs}
\max \big\{ \min\big[\mathcal{H}^{ij}[\mathcal{Z}](x),\, \mathcal{Z}(x)-h^{\xi}(x)\big],\,
\mathcal{Z}(x)-h^{\eta}(x) \big\}=0
\quad \text{on } \mathbb{R}^*_+.
\end{equation}

\el

\bpf

Let $\bar{x}\in \mathbb{R}^*_+$ and consider the following cases.

\textbf{(a) Case $\bar{x}\in C$.}

Since $\mathcal{Z}\in C^2(C)$ and solves \eqref{lmeq}, and since
\[
h^{\xi}\le \mathcal{Z}\le h^{\eta}\quad \text{on } C,
\]
it follows that $\mathcal{Z}$ satisfies \eqref{vs} classically on $C$, hence also in the viscosity sense.

\textbf{(b) Case $\bar{x}\in S^{\xi}$.}

We focus on the boundary case $\bar{x}\in \partial S^{\xi}$. We distinguish two situations.

\textbf{(i) Disjoint boundaries $\partial S^{\xi}\cap \partial S^{\eta}=\emptyset$.}

Without loss of generality, assume that $\bar{x}$ is a left boundary point of $S^{\xi}$. Then there exists $\epsilon>0$ such that
\[
(\bar{x}-\epsilon,\bar{x})\subset C,
\]
where $\mathcal{Z}\in C^2$.

Let $\phi\in C^2$ be such that $\bar{x}$ is a local minimum of $\mathcal{Z}-\phi$. Since $\mathcal{Z}\in C^1$ on $S^{\xi}$, we have
\[
\mathcal{Z}'(\bar{x})=\phi'(\bar{x}).
\]

By Taylor’s formula,
\[
\mathcal{Z}(\bar{x}-\lambda)
= \mathcal{Z}(\bar{x}) - \lambda \int_0^1 \mathcal{Z}'(\bar{x}-t\lambda)\,dt,
\]
\[
\phi(\bar{x}-\lambda)
= \phi(\bar{x}) - \lambda \int_0^1 \phi'(\bar{x}-t\lambda)\,dt.
\]

Since $\mathcal{Z}-\phi$ has a minimum at $\bar{x}$, we obtain
\begin{equation}
\label{lambdaeq}
\int_0^1 \big(\phi'(\bar{x}-t\lambda)-\mathcal{Z}'(\bar{x}-t\lambda)\big)\,dt \ge 0,
\quad \forall \lambda\in(0,\epsilon).
\end{equation}

Using \eqref{lmeq}, we have $\mathcal{H}^{ij}[\mathcal{Z}](x)=0$ on $(\bar{x}-\epsilon,\bar{x})$.
Passing to the limit yields existence of $\mathcal{Z}''(\bar{x}^-)$ and
\begin{equation}
\label{eqpf}
r\mathcal{Z}(\bar{x})
- b_{ij}\bar{x}\mathcal{Z}'(\bar{x})
- \tfrac12\sigma_{ij}^2 \bar{x}^2 \mathcal{Z}''(\bar{x}^-)
- f_{ij}(\bar{x})=0.
\end{equation}

Letting $\lambda\to 0$ in \eqref{lambdaeq}, we obtain
\[
\phi''(\bar{x}) \le \mathcal{Z}''(\bar{x}^-).
\]
Substituting into \eqref{eqpf} yields
\[
\mathcal{H}^{ij}[\mathcal{Z}](\bar{x}) \ge 0,
\]
which proves the supersolution property.

\textbf{(ii) Non-disjoint boundaries $\partial S^{\xi}\cap \partial S^{\eta}\neq \emptyset$.}

Consider $\bar{x}\in \partial S^{\xi}\cap \partial S^{\eta}$. On this set,
\[
\mathcal{Z}=h^{\xi}=h^{\eta}.
\]

If $\mathcal{H}^{ij}[\mathcal{Z}](\bar{x})\ge 0$, then
\[
\max\{\min[\mathcal{H}^{ij}[\mathcal{Z}], \mathcal{Z}-h^{\xi}], \mathcal{Z}-h^{\eta}\}=0.
\]

If $\mathcal{H}^{ij}[\mathcal{Z}](\bar{x})\le 0$, then
\[
\max\{\min[\mathcal{H}^{ij}[\mathcal{Z}], \mathcal{Z}-h^{\xi}], \mathcal{Z}-h^{\eta}\}=0.
\]

Thus, $\mathcal{Z}$ satisfies \eqref{vs} on the intersection.

\textbf{(c) Case $\bar{x}\in S^{\eta}$.}

The proof follows by symmetric arguments as in case (b).\\
\no
Therefore, $\mathcal{Z}$ is a viscosity solution of \eqref{vs}.
\hfill $\Box$
\epf

\section{Explicit Solution of the Switching Problem}
In this part, we solve the switching game problem in the three-regime case ($d=3$). We will work with nonnegative switching costs. The solutions are the functions $v_{ij}(x)$ satisfying the following system of quasi-variational inequalities:
\begin{flalign}
\max &\Big\{ \min\big[\mathcal{H}[v_{00}](x),\, v_{00}(x)-M_{00}[v](x)\big],\, v_{00}(x)-N_{00}[v](x)\Big\}=0, \\
\max &\Big\{ \min\big[\mathcal{H}[v_{01}](x),\, v_{01}(x)-M_{01}[v](x)\big],\, v_{01}(x)-N_{01}[v](x)\Big\}=0, \\
\max &\Big\{ \min\big[\mathcal{H}[v_{02}](x),\, v_{02}(x)-M_{02}[v](x)\big],\, v_{02}(x)-N_{02}[v](x)\Big\}=0, \\
\max &\Big\{ \min\big[\mathcal{H}[v_{10}](x),\, v_{10}(x)-M_{10}[v](x)\big],\, v_{10}(x)-N_{10}[v](x)\Big\}=0, \\
\max &\Big\{ \min\big[\mathcal{H}[v_{11}](x),\, v_{11}(x)-M_{11}[v](x)\big],\, v_{11}(x)-N_{11}[v](x)\Big\}=0, \\
\max &\Big\{ \min\big[\mathcal{H}[v_{12}](x),\, v_{12}(x)-M_{12}[v](x)\big],\, v_{12}(x)-N_{12}[v](x)\Big\}=0, \\
\max &\Big\{ \min\big[\mathcal{H}[v_{20}](x),\, v_{20}(x)-M_{20}[v](x)\big],\, v_{20}(x)-N_{20}[v](x)\Big\}=0, \\
\max &\Big\{ \min\big[\mathcal{H}[v_{21}](x),\, v_{21}(x)-M_{21}[v](x)\big],\, v_{21}(x)-N_{21}[v](x)\Big\}=0, \\
\max &\Big\{ \min\big[\mathcal{H}[v_{22}](x),\, v_{22}(x)-M_{22}[v](x)\big],\, v_{22}(x)-N_{22}[v](x)\Big\}=0.
\end{flalign}

We choose the following structure for the running payoff:
\begin{equation}
f_{ij}(x)=x^{\gamma}, \qquad 0<\gamma<1.
\end{equation}

As shown in \cite{MP-BA}, the function
\[
\hat{V}_{ij}(x)=\mathbb{E}\left[\int_0^{\infty} e^{-rt} f_{ij}(X_t^{x,ij})\,dt\right],
\]
is a particular solution of
\begin{equation}
\mathcal{H}[v_{ij}](x)=0.
\label{ode}
\end{equation}

In particular, when $f_{ij}(x)=x^{\gamma}$, we obtain
\[
\hat{V}_{ij}(x)=K_{ij}x^{\gamma},
\qquad
K_{ij}= \frac{1}{r - b_{ij}\gamma + \frac{1}{2}\sigma_{ij}^2\gamma(1-\gamma)} > 0,
\quad i,j\in\{0,1,2\}.
\]

Under the above assumptions, the general solution of \eqref{ode} is given by
\[
v(x)=A_{ij}x^{m_{ij}^+}+B_{ij}x^{m_{ij}^-},
\]
where $A_{ij},B_{ij}$ are constants and
\begin{equation}
m_{ij}^+
= -\frac{b_{ij}}{\sigma_{ij}^2}+\frac{1}{2}
+ \sqrt{\left(-\frac{b_{ij}}{\sigma_{ij}^2}+\frac{1}{2}\right)^2+\frac{2r}{\sigma_{ij}^2}}
>1,
\label{mu}
\end{equation}
\begin{equation}
m_{ij}^-
= -\frac{b_{ij}}{\sigma_{ij}^2}+\frac{1}{2}
- \sqrt{\left(-\frac{b_{ij}}{\sigma_{ij}^2}+\frac{1}{2}\right)^2+\frac{2r}{\sigma_{ij}^2}}
<0.
\label{md}
\end{equation}
We also assume that \[K_{ab}=K_{cd} \iff \{m_{ab}^-=m_{cd}^-\} \cap \{m_{ab}^+ = m_{cd}^+ \}. \]
With regard to \textcolor{blue}{\ref{mu}}, \textcolor{blue}{\ref{md}}, and linear growth property of the value functions (see \cite{MP-BA}), we guess that if both players are in their continuation region in the neighborhood of $0$, then $B_{ij}=0$, and if they are in their continuation region in the neighborhood of $+ \infty$, then $A_{ij}=0.$\\

\begin{theo}
\textbf{Case where $K_{ij}$ is constant}

Let $v_{ij}(x)=\hat{V}_{ij}(x)$ for $i,j\in\{0,1,2\}$. Then the $v_{ij}$ are the solutions of the system of quasi-variational inequalities with:
\[ S^\xi_{ij} =\emptyset, \qquad C^\xi_{ij} = (0;+\infty), \quad S^\eta_{ij} =\emptyset, \qquad C^\eta_{ij} = (0;+\infty). \]
\end{theo}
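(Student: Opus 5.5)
The case ``$K_{ij}$ constant'' means $K_{ij}=K$ for all $i,j\in\{0,1,2\}$. By the standing assumption relating the $K$'s to the exponents, this forces $m^\pm_{ij}=m^\pm$ for all $i,j$, so all nine regimes have the same dynamics and the same particular solution $\hat V_{ij}(x)=Kx^{\gamma}$.

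With $v_{ij}=Kx^\gamma$ for every $i,j$, I will show that each of the nine quasi-variational inequalities holds classically at every $x>0$ with the continuation branch active. Since classical solutions are viscosity solutions, the system is then solved in the viscosity sense. The uniqueness part of the Proposition of Section~3 (unique continuous solution of polynomial growth; here $v_{ij}$ has growth $x^\gamma$ with $\gamma<1$) then identifies these functions with the value functions $V_{ij}$. Lemma~\ref{lem:lemma3} is not really needed, because there are no switching regions; it applies trivially with $C=\mathbb{R}^*_+$ and $S^\xi=S^\eta=\emptyset$.

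\textbf{Step 1 (equation).} By the computation recalled from \cite{MP-BA}, $\mathcal{H}[v_{ij}](x)=0$ on $(0,\infty)$: substituting $Kx^\gamma$ gives $\big(r-b\gamma+\tfrac12\sigma^2\gamma(1-\gamma)\big)Kx^\gamma-x^\gamma=0$ by the definition of $K$.

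\textbf{Step 2 (obstacles).} We have
\[
M_{ij}[v](x)=\max_{k\neq i}\{Kx^\gamma-c_{ik}\}=Kx^\gamma-\min_{k\neq i}c_{ik},
\]
\[
N_{ij}[v](x)=Kx^\gamma+\min_{l\neq j}\chi_{jl}.
\]
Hence
\[
v_{ij}-M_{ij}[v]=\min_{k\neq i}c_{ik}\quad\text{and}\quad v_{ij}-N_{ij}[v]=-\min_{l\neq j}\chi_{jl}.
\]
Using the nonnegativity of the costs assumed in Section~4, we get $v_{ij}-M_{ij}[v]\ge 0$ and $v_{ij}-N_{ij}[v]\le 0$. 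Therefore
\[
\max\{\min[0,\,v_{ij}-M_{ij}[v]],\,v_{ij}-N_{ij}[v]\}=\max\{0,\,-\min_l\chi_{jl}\}=0.
\]

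\textbf{Step 3 (regions).} To get $S^\xi_{ij}=\emptyset$ and $S^\eta_{ij}=\emptyset$ we need strict inequalities, i.e.\ strictly positive costs $c_{ik}>0$ and $\chi_{jl}>0$ for $k\neq i$ and $l\neq j$. If some cost vanishes, the set where $v_{ij}=M_{ij}[v]$ would be all of $(0,\infty)$, and the statement of the regions must then be read as ``no switch is ever strictly optimal.''

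\textbf{Main obstacle.} The delicate point is exactly this: justifying that the costs are strictly positive (so that the regions are empty rather than degenerate), and checking that the growth class required by the uniqueness result covers $x^\gamma$. I would state explicitly that $c_{ik}>0$ and $\chi_{jl}>0$ are assumed. Under that assumption, at every $x$ both players strictly prefer not to switch, so $C^\xi_{ij}=C^\eta_{ij}=(0,+\infty)$.
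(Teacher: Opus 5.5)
Your proposal is correct and follows the paper's argument. You get $\mathcal{H}[v_{ij}]=0$ because $\hat V_{ij}$ is a particular solution, the obstacle gaps are the constants $\min_{k\neq i}c_{ik}$ and $-\min_{l\neq j}\chi_{jl}$, and positivity of the costs then gives the QVI with empty switching regions (the paper closes by invoking Lemma~\ref{lem:lemma3}, which, as you note, is trivial here). Your computation of the gaps is more precise than the paper's, which writes them loosely as $c_{ij}$ and $-\chi_{ji}$ and uses strict positivity of the costs without flagging it.
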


\begin{proof}
Since $\hat{V}_{ij}(x)$ is a particular solution of \eqref{ode}, it follows that
\[
\mathcal{H}[v_{ij}](x)=0.
\]

Moreover,
\[
v_{ij}(x)-M[v_{ij}](x)=\hat{V}_{ij}(x)-(\hat{V}_{ij}(x)-c_{ij})=c_{ij},
\]
and since $c_{ij}>0$, we have $v_{ij}(x)-M[v_{ij}](x)\geq 0$.

Similarly,
\[
v_{ij}(x)-N[v_{ij}](x)=\hat{V}_{ij}(x)-(\hat{V}_{ii}(x)+\chi_{ji})=-\chi_{ji},
\]
and since $\chi_{ji}>0$, we obtain $v_{ij}(x)-N[v_{ij}](x)\leq 0$.

\noindent Therefore, all the conditions of Lemma \ref{lem:lemma3} are satisfied, which implies that $v_{ij}(x)$ solves the QVI (system \eqref{qv}).
\hfill $\Box$
\end{proof}

\bethe
Case where $K_{0j}=Cst_1 < K_{ij}=Cst_2$, $\; i=1,2,\; j=0,1,2$.
\\
\\
Let $v_{ij}(x)$ be defined as follows:
\begin{flalign}
    v_{0j}(x)&= (\hat{V}_{0j}(x)+A_{0j}x^{m_{0j}^+})\mathbf{1}_{x<x_{0j}} + (\hat{V}_{1j}(x) - \min \{c_{01}, c_{02}\})\mathbf{1}_{x\geq x_{0j}}, \; j=0,1,2\\
    v_{ij}(x)&= \hat{V}_{ij}(x), \quad i=1,2,\; j=0,1,2.
\end{flalign}

where $x_{ij}$ and $A_{ij}$ are obtained using $\mathrm{C}^1$ property of the value functions and are defined:
\begin{equation*}
(K_{1j}-K_{0j})(x_{0j})^{\gamma}
= \frac{m_{0j}^+}{m_{0j}^+ - \gamma}\min\{c_{01},c_{02}\},
\quad
A_{0j} = (K_{1j}-K_{0j})\frac{\gamma}{m_{0j}^+}(x_{0j})^{\gamma-m_{0j}^+}.
\end{equation*}

\eethe

Such $v_{ij}(x)$ are the solutions associated to the QVI.

\[ S^\xi_{0j} =[x_{0j}, +\infty),\quad C^\xi_{0j} = (0;x_{0j}), \quad  S^\xi_{0j} =\emptyset ,\quad C^\xi_{0j} = (0; +\infty), \quad j=0,1,2.\]

\[ S^\xi_{ij} =\emptyset, \quad C^\xi_{ij} = (0;+\infty), \quad S^\eta_{ij} =\emptyset, \quad C^\eta_{ij} = (0;+\infty). \quad i=1,2, \quad j=0,1,2. \]

We can see that by design \[ v_{00}(x)=v_{01}(x)=v_{02}(x). \]
\bpf

We only verify the QVI for $v_{00}$ and $v_{10}$, since the remaining cases follow by exactly the same arguments.

\noindent
\textbf{Step 1. Verification for $v_{00}$.}

\noindent
$\bullet$ For $x<x_{00}$, we have $\mathcal{H}[v_{00}](x)=0$ by definition. On the other hand,

\[
v_{00}(x)-M_{00}[v](x)
= v_{00}(x)-\max (v_{10}(x)-c_{01},\, v_{20}(x)-c_{02}).
\]

\[
\max (v_{10}(x)-c_{01},\, v_{20}(x)-c_{02})
= v_{10}(x)-\min \{c_{01}, c_{02}\},
\]
then
\[
v_{00}(x)-M_{00}[v](x)
= \hat{V}_{00}(x)+A_{00}x^{m_{00}^+}-\hat{V}_{10}(x)+\min \{c_{01}, c_{02}\}.
\]

Define
\[
h_1(x)=\hat{V}_{00}(x)+A_{00}x^{m_{00}^+}-\hat{V}_{10}(x)+\min \{c_{01}, c_{02}\}.
\]

A straightforward computation shows that $h_{1}''(x)>0$, hence $h_{1}'(x)$ is increasing. Moreover, $h_{1}'(x_{00})=0$, so $h_{1}'(x)<0$ for $x<x_{00}$. Since $h_{1}(x_{00})=0$, we deduce that $h_{1}(x)>0$ for $x<x_{00}$, hence
\[
v_{00}(x)-M_{00}[v](x)>0.
\]

Next,
\[
v_{00}(x)-\mathcal{N}_{00}[v](x)
= v_{00}(x)-\min (v_{01}(x)+\chi_{01},\, v_{02}(x)+\chi_{02})
= -\min(\chi_{01},\chi_{02})<0.
\]

Thus, both obstacle inequalities are satisfied, and by Lemma \ref{lem:lemma3}, $v_{00}$ solves the QVI.

\medskip
\noindent
$\bullet$ For $x\geq x_{00}$, we have $v_{00}(x)=\hat{V}_{10}(x)-\min(c_{01},c_{02})$, hence
\[
v_{00}(x)-M_{00}[v](x)=0.
\]

Moreover, a straightforward calculus shows that
\[
\mathcal{H}[v_{00}](x)
= \frac{K_{10}-K_{00}}{K_{00}}x^\gamma - c_{02} >0.
\]

Next,
\[
v_{00}(x)-N_{00}[v](x)
= -\min(\chi_{01},\chi_{02})<0.
\]

Hence, $v_{00}$ satisfies the QVI on this region as well.

\medskip

\noindent
\textbf{Step 2. Verification for $v_{10}$.}

We have $\mathcal{H}[v_{10}](x)=0$ for all $x>0$.

Moreover,
\[
v_{10}(x)-M_{10}[v](x)
= v_{10}(x)-\max(v_{00}(x)-c_{10},\, v_{20}(x)-c_{12}).
\]

\noindent
$\bullet$ For $x<x_{00}$, define
\[
h_2(x)=v_{10}(x)-v_{00}(x)+c_{10}.
\]

Then
\[
h_2(x)=(K_{10}-K_{00})x^\gamma - A_{00}x^{m_{00}^+}+c_{10}.
\]

Since $h_{2}''(x)<0$, $h_{2}'$ is decreasing.

Using
\[
h_{2}'(x_{00})=0,
\]
we obtain
\[
h_{2}'(x)>0 \text{ for } x<x_{00},
\]
and hence $h_{2}$ is increasing.

Since $h_{2}(0)=c_{10}>0$, we deduce that
\[
v_{10}(x)-M_{10}[v](x)>0.
\]

\noindent
$\bullet$ For $x\geq x_{00}$, we have
\[
v_{10}(x)-M_{10}[v](x)
=\min(\min\{c_{01},c_{02}\}+c_{10},c_{12})>0.
\]

Finally,
\[
v_{10}(x)-N_{10}[v](x)
= -\min(\chi_{01},\chi_{02})<0.
\]

Thus, $v_{10}$ solves the QVI.
\hfill $\Box$
\epf

\bethe
Case where $K_{i0}=Cst_1 < K_{ij}=Cst_2$, $\; i=0,1,2,\; j=1,2$.
\\
\\
Let $v_{ij}(x)$ be defined as follows:
\begin{flalign}
    v_{i0}(x)&= \hat{V}_{i0}(x), \; i=0,1,2\\
    v_{ij}(x)&= (\hat{V}_{ij}(x)+A_{ij} x^{m_{ij}^+})\mathbf{1}_{x<x_{ij}}
    + (\hat{V}_{i0}(x) + \chi_{j0})\mathbf{1}_{x\geq x_{ij}},
    \; i=0,1,2,\; j=1,2.
\end{flalign}

Where $x_{ij}$ and $A_{ij}$ are obtained using $\mathrm{C}^1$ property of the value functions and are defined by
\begin{equation*}
(K_{ij}-K_{i0})(x_{ij})^{\gamma}
= \frac{m_{ij}^+}{m_{ij}^+ - \gamma}\chi_{j0},
\quad
A_{ij} = (K_{i0}-K_{ij})\frac{\gamma}{m_{ij}^+}(x_{ij})^{\gamma-m_{ij}^+}.
\end{equation*}

Such $v_{ij}(x)$ are the solutions associated to the QVI.

With:
\[
S^\xi_{i0} =\emptyset,\quad
C^\xi_{i0} = (0;+\infty),\quad
S^\eta_{i0} =\emptyset,\quad
C^\eta_{i0} = (0;+\infty),
\quad i=0,1,2
\]

\[
S^\xi_{ij} =\emptyset,\quad
C^\xi_{ij} = (0;+\infty),\quad
S^\eta_{ij} =[x_{ij}, \infty ),\quad
C^\eta_{ij} = (0;x_{ij}),
\quad i=0,1,2,\quad j=1,2.
\]

We can see that, by design,
\[
v_{01}(x)=v_{11}(x)=v_{21}(x)
\]
and
\[
v_{02}(x)=v_{12}(x)=v_{22}(x).
\]
\eethe

\bpf

We only verify the QVI for $v_{00}$ and $v_{20}$, since the remaining cases follow by exactly the same arguments.

\medskip

\noindent
\textbf{Step 1. Verification for $v_{00}$.}

Since
\[
v_{00}(x)=\hat V_{00}(x),
\]
we immediately have
\[
\mathcal H[v_{00}](x)=0.
\]

Furthermore,
\[
v_{00}(x)-M_{00}[v]
=
v_{00}(x)
-
\max\{v_{10}(x)-c_{01},\,v_{20}(x)-c_{02}\}.
\]

Given that
\[
v_{10}(x)=v_{20}(x),
\]
then
\[
v_{00}(x)-M_{00}[v]
=
v_{00}(x)-v_{10}(x) + \min \{ c_{01}, c_{02} \}
=
\min \{ c_{01}, c_{02} \} >0.
\]

This proves
\[
v_{00}(x)-M_{00}[v]>0.
\]

Next,
\[
v_{00}(x)-N_{00}[v]
=
v_{00}(x)
-
\min\{v_{01}(x)+\chi_{01},\,v_{02}(x)+\chi_{02}\}
=
-\min\{\chi_{01},\chi_{02}\}<0.
\]

Hence, $v_{00}$ satisfies the corresponding quasi-variational inequality.\\

\no
\textbf{Step 2. Verification for $v_{11}$.}
\\

\noindent
$\bullet$ For $x<x_{11}$, by construction,
\[
\mathcal H[v_{11}](x)=0.
\]

Moreover,
\[
v_{11}(x)-M_{11}[v]
=
v_{11}(x)-\max\{v_{01}(x)-c_{10},\,v_{21}(x)-c_{12}\}
=\min\{c_{10},c_{12}\}>0.
\]

Next,
\begin{flalign*}
v_{11}(x)-N_{11}[v]
&= v_{11}(x) - \min \{v_{10}(x) + \chi_{10}; v_{12}(x) + \chi_{12} \}.
\end{flalign*}

Let us determine
\[
m(x)=\min \{v_{10}(x) + \chi_{10}; v_{12}(x) + \chi_{12} \}.
\]

$x_{11}$ and $x_{12}$ are related to different switching costs $\chi_{10}$ and $\chi_{20}$, so they are independent.\\
\noindent\\
Assume that $x_{11} < x_{12}$:

Let
\[
h_3(x)=\hat V_{12}(x) + A_{12}x^{m_{12}^+}-\hat V_{10}(x)-\chi_{20},
\]
and
\begin{flalign*}
h_4(x)&=v_{12}(x) + \chi_{12} -(v_{10}(x) + \chi_{10})\\
&=\hat V_{12}(x) + A_{12}x^{m_{12}^+}
+\chi_{12}-\hat V_{10}(x)-\chi_{10}.
\end{flalign*}

We can see that
\begin{equation}
\label{id}
h_{4}(x)=h_3(x)+ \chi_{20}+ \chi_{12}- \chi_{10}.
\end{equation}

A straightforward calculus shows that $h_{4}''(x)<0$.

Hence, $h_{4}'(x)$ is decreasing. Since
\[
h_{4}'(x_{12})=0,
\]
it follows that $h_{4}(x)$ is increasing.

We have, from \ref{id},
\[
h_{4}(0)=\chi_{12} -\chi_{10}
\quad\text{and}\quad
h_{4}(x_{12})=\chi_{20}+\chi_{12} -\chi_{10}.
\]

$\star$ \quad \text{If } $\chi_{12} >\chi_{10}$, \text{ then } $h_{4}(x)>0$ for $x< x_{12}$, hence
\[
m(x)= v_{10}(x) + \chi_{10}.
\]

In this case,
\begin{flalign*}
v_{11}(x)-N_{11}[v]
&=v_{11}(x)-v_{10}(x) - \chi_{10}\\
&=\hat V_{11}(x) + A_{11}x^{m_{11}^+}-\hat V_{10}(x)-\chi_{10}.
\end{flalign*}

Let
\[
h_{5}(x)=\hat V_{11}(x) + A_{11}x^{m_{11}^+}-\hat V_{10}(x)-\chi_{10}.
\]

A straightforward calculus shows that $h_{5}''(x)<0$, hence $h_{5}'(x)$ is decreasing.

Using
\[
h_{5}'(x_{11})=0,
\]
it follows that
\[
h_{5}'(x) > 0, \quad x<x_{11}.
\]

Hence, $h_{5}(x)$ is increasing.

By using
\[
h_{5}(x_{11})=0,
\]
we prove that
\[
v_{11}(x)-N_{11}[v]<0.
\]

$\star$ \quad \text{If } $\chi_{12} <\chi_{10}$, $\text{ there is a } x^* \text{ such that:}$

\[
m(x)=
\begin{cases}
\hat V_{12}(x) + A_{12}x^{m_{12}^+} + \chi_{12}, &x < x^*, \\
v_{10}(x) + \chi_{10}, &x \geq x^*.
\end{cases}
\]

Then, for $x<x^*$,

\begin{flalign*}
h_{6}(x)&=V_{11}(x) + A_{11}x^{m_{11}^+} -V_{12}(x) - A_{12}x^{m_{12}^+}  - \chi_{12}\\
&= (A_{11}-A_{12})x^{m_{11}^+}- \chi_{12}
\end{flalign*}

\[
h_{6}(x)=-h_{4}(x)+\hat V_{11}(x)
+A_{11}x^{m_{11}^+}-\hat V_{10}(x)-\chi_{10}.
\]

It is easy to see that
\[
h_{7}(x)=\hat V_{11}(x) + A_{11}x^{m_{11}^+}-\hat V_{10}(x)-\chi_{10}
\]
is increasing for $x< x_{11}$.

Using
\[
h_{7}(0)=-\chi_{10}
\quad\text{and}\quad
h_{7}(x_{11})=0,
\]
we obtain that
\[
h_{7}(x) <0
\]
for $x< x_{11}$. This implies that $h_{7}(x^*)<0$.

Hence, $h_{6}(x)<0$ for $x<x^*$, which proves
\[
v_{11}(x)-N_{11}[v]<0.
\]

For $x\geq x^*$, the proof for $v_{11}(x)-N_{11}[v]<0$ is identical to that for
\[
\{\chi_{12} >\chi_{10}\} \cap \{x_{11} < x_{12}\}.
\]
\noindent\\
Assume now that $x_{12} \leq x_{11}$:

Then we have
\begin{flalign*}
m(x)&=\hat V_{10}(x)+\min\{ \chi_{10}, \chi_{12}+\chi_{20}\}
\quad \text{for } x_{12} \leq x \leq x_{11}\\
&=\hat V_{10}(x)+\chi_{10}.
\end{flalign*}

Hence, the proof for $v_{11}(x)-N_{11}[v]<0$ is identical to that for
\[
\{\chi_{12} >\chi_{10}\} \cap \{x_{11} < x_{12}\}.
\]

For $x<x_{12}$:

$\star$ \quad \text{If } $\chi_{12} >\chi_{10}$,
\[
m(x)= v_{10}(x) + \chi_{10}.
\]

And the proof for $v_{11}(x)-N_{11}[v]<0$ is identical to that for
\[
\{\chi_{12} >\chi_{10}\} \cap \{x_{11} < x_{12}\}.
\]

$\star$ \quad \text{If } $\chi_{12} <\chi_{10}$, there is some $x^*$ such that:

\[
m(x)=
\begin{cases}
\hat V_{12}(x) + A_{12}x^{m_{12}^+} + \chi_{12}, &x < x^*,\\
v_{10}(x) + \chi_{10}, &x \geq x^*.
\end{cases}
\]

The proof follows the same arguments as for
\[
\{\chi_{12} <\chi_{10}\} \cap \{x_{11} < x_{12}\}.
\]
\\
\\
\noindent
Therefore, the QVI is satisfied on $(0,x_{11})$.
\\
\\
\noindent
$\bullet$ For $x\ge x_{11}$, by construction,
\begin{flalign*}
v_{11}(x)-M_{11}[v]
&=v_{11}(x)-\max\{v_{01}(x)-c_{10},\,v_{21}(x)-c_{12}\}\\
&=\hat V_{10}(x)+ \chi_{10}
-\max\{v_{01}(x)-c_{10},\,v_{21}(x)-c_{12}\}\\
&=\hat V_{10}(x)+ \chi_{10}
-\max\{\hat{V}_{00}(x) +\chi_{10} -c_{10},
\hat{V}_{10}(x) +\chi_{10} -c_{12}\}\\
&=\min\{c_{10},c_{12}\} >0.
\end{flalign*}

Moreover,
\[
\mathcal H[v_{11}](x)
=
\frac{K_{10}-K_{11}}{K_{11}}x^\gamma+ \chi_{10},
\]
is negative by a straightforward calculus.

Hence,
\[
\min\big[\mathcal{H}[v_{11}](x),\, v_{11}(x)-M_{11}[v](x)\big]<0.
\]

Next,
\begin{flalign*}
v_{11}(x)-N_{11}[v]
&= v_{11}(x) - \min \{v_{10}(x) + \chi_{10}; v_{12}(x) + \chi_{12} \}.
\end{flalign*}

If $x_{12}<x_{11}$,
\[
v_{11}(x)-N_{11}[v]
= \hat{V}_{10}(x) +\chi_{10}
-\min \{\hat{V}_{10}(x) +\chi_{10};
\hat{V}_{10}(x) +\chi_{20} + \chi_{12} \}
=0.
\]

\noindent
Else, for $x_{11}<x<x_{12}$,
\[
v_{11}(x)-N_{11}[v]
= \hat{V}_{10}(x) +\chi_{10}
-\min \{\hat{V}_{10}(x) +\chi_{10};
\hat V_{12}(x) + A_{12}x^{m_{12}^+} + \chi_{12} \}.
\]

Define
\[
h_9(x)=\hat V_{12}(x) + A_{12}x^{m_{12}^+}
+ \chi_{12}-\hat{V}_{10}(x) -\chi_{10}.
\]

A straightforward calculus shows that $h_9$ is increasing and
\[
h_9(0)=\chi_{12}+\chi_{20}-\chi_{10}>0.
\]

Hence,
\[
\min \{\hat{V}_{10}(x) +\chi_{10};
\hat V_{12}(x) + A_{12}x^{m_{12}^+} + \chi_{12} \}
=\hat{V}_{10}(x) +\chi_{10}.
\]

For $x>x_{12}$,

\begin{flalign*}
\min \{v_{10}(x) + \chi_{10}; v_{12}(x) + \chi_{12} \}
&=\min \{\hat{V}_{10}(x) +\chi_{10};
\hat{V}_{10}(x) +\chi_{20} + \chi_{12} \}\\
&=\hat{V}_{10}(x) +\chi_{10}.
\end{flalign*}

In all cases,
\[
v_{11}(x)-N_{11}[v]=0.
\]

Hence, $v_{11}$ satisfies the QVI on $(x_{11},\infty)$, and therefore on $(0,\infty)$.
\\
\noindent
The proofs for $v_{01}$, $v_{02}$, $v_{10}$, $v_{12}$, $v_{21}$, and $v_{22}$ are completely analogous and are therefore omitted.
\hfill $\Box$
\epf

\bethe
Case where
\[
K_{ij}=Cst_1<K_{2j}=Cst_2,\qquad i=0,1,\quad j=0,1,2.
\]

Let the candidate value functions be defined by
\begin{align}
v_{ij}(x)&=
\bigl(\hat V_{ij}(x)+A_{ij}x^{m_{ij}^+}\bigr)\mathbf 1_{x<x_{ij}}
+\bigl(\hat V_{2j}(x)-c_{i2}\bigr)\mathbf 1_{x\ge x_{ij}},
\qquad i=0,1,\quad j=0,1,2,\\
v_{2j}(x)&=\hat V_{2j}(x),\qquad j=0,1,2.
\end{align}

Where $x_{ij}$ and $A_{ij}$ are defined by
\begin{equation*}
(K_{2j}-K_{ij})(x_{ij})^{\gamma}
= \frac{m_{ij}^+}{m_{ij}^+ - \gamma} c_{i2},
\quad
A_{ij} = (K_{2j}-K_{ij})\frac{\gamma}{m_{ij}^+}(x_{ij})^{\gamma-m_{ij}^+}.
\end{equation*}

Such $v_{ij}(x)$ are the solutions associated to the QVI.

With:
\[
S^\xi_{2j} =\emptyset,\quad
C^\xi_{2j} = (0;+\infty),\quad
S^\eta_{2j} =\emptyset,\quad
C^\eta_{2j} = (0;+\infty),
\quad j=0,1,2
\]

\[
S^\xi_{ij} =[x_{ij}, \infty ),\quad
C^\xi_{ij} = (0;x_{ij}),\quad
S^\eta_{ij}=\emptyset,\quad
C^\eta_{ij}=(0;+\infty),
\quad i=0,1,\quad j=0,1,2.
\]
\eethe

\bpf

We only verify the QVI for $v_{00}$ and $v_{20}$, since the remaining cases follow by exactly the same arguments.

\medskip

\noindent
\textbf{Step 1. Verification for $v_{00}$.}

\noindent
$\bullet$ For $x<x_{00}$, by construction,
\[
\mathcal H[v_{00}](x)=0.
\]

Moreover,
\[
v_{00}(x)-M_{00}[v]
=
v_{00}(x)-\max\{v_{10}(x)-c_{01},\,v_{20}(x)-c_{02}\}.
\]

Let us determine
\[
n(x)=\max \{v_{10}(x) - c_{01}, v_{20}(x) - c_{02} \}.
\]

$x_{00}$ and $x_{10}$ are related to different switching costs $c_{02}$ and $c_{12}$, so they are independent.\\
\noindent\\
Assume that $x_{00} < x_{10}$:

Let
\[
h_{10}(x)=\hat V_{10}(x) + A_{10}x^{m_{10}^+}-\hat V_{20}(x)+c_{12},
\]
and
\begin{flalign*}
h_{11}(x)&=v_{10}(x) - c_{01} -(v_{20}(x) - c_{02})\\
&=\hat V_{10}(x) + A_{10}x^{m_{10}^+}
-c_{01} -\hat V_{20}(x) +c_{02}.
\end{flalign*}

We can see that
\begin{equation}
\label{id2}
h_{11}(x)=h_{10}(x)- c_{01}+ c_{02} - c_{12}.
\end{equation}

A straightforward calculus shows that $h_{11}''(x)>0$.

Hence, $h_{11}'(x)$ is increasing. Since
\[
h_{11}'(x_{10})=0,
\]
$h_{11}(x)$ is decreasing.

We have, from \ref{id2},
\[
h_{11}(0)= c_{02}- c_{01}
\quad\text{and}\quad
h_{11}(x_{10})=- c_{01}+ c_{02} - c_{12} <0.
\]

$\star$ \quad \text{If } $c_{02}< c_{01}$, \text{ then } $h_{11}(x)<0$ for $x< x_{00}$, hence
\[
n(x)=\hat V_{20}(x) - c_{02}.
\]

In this case,

Let
\[
h_{12}(x)=\hat V_{00}(x) + A_{00}x^{m_{00}^+}
-\hat V_{20}(x) + c_{02}.
\]

A straightforward calculus shows that $h_{12}''(x)>0$. Hence, $h_{12}'(x)$ is increasing.

Using
\[
h_{12}'(x_{00})=0,
\]
it follows that $h_{12}'(x)<0$. Hence, $h_{12}(x)$ is decreasing.

Given that
\[
h_{12}(x_{00})=0,
\]
we get the desired result.

$\star$ \quad \text{If } $c_{02}> c_{01}$, $\text{ there is a } x^* \text{ such that:}$

\[
n(x)=
\begin{cases}
v_{20}(x) - c_{02}, &x\geq x^*,\\
\hat V_{10}(x) + A_{10}x^{m_{10}^+}- c_{01}, &x < x^*.
\end{cases}
\]

Then, for $x \geq x^*$, the proof for $v_{00}(x)-M_{00}[v]>0$ is identical to that for
\[
\{c_{02}< c_{01}\} \cap \{x_{00} < x_{10}\}.
\]

For $x < x^*$,
\noindent
Let
\begin{flalign*}
h_{13}(x) &=\hat V_{00}(x) + A_{00}x^{m_{00}^+}
-\hat V_{10}(x) - A_{10}x^{m_{10}^+} + c_{10}\\
&=(A_{00} -A_{10})x^{m_{00}^+}+ c_{10}.
\end{flalign*}

If $(A_{00} -A_{10})>0$, we immediately get the desired result.

Else,
$h_{13}'(x)<0$ and hence $h_{13}(x)$ is decreasing.

Let
\[
h_{14}(x)=\hat V_{10}(x) + A_{10}x^{m_{10}^+}
-\hat V_{20}(x) -c_{10}+ c_{02}.
\]

We can see that
\begin{flalign*}
h_{13}(x)&=-h_{14}(x)+\hat V_{00}(x)
+A_{00}x^{m_{00}^+}-\hat V_{20}(x)+c_{02}\\
&=-h_{14}(x)+h_{12}(x).
\end{flalign*}

Since
\[
h_{13}(x^*)=h_{12}(x^*)>0,
\]
this proves that
\[
v_{00}(x)-M_{00}[v]>0.
\]

\noindent\\
Assume now that $x_{10} \leq x_{00}$:

Then we have
\begin{flalign*}
n(x)&=\hat V_{20}(x)-\min\{c_{01} + c_{12}, c_{02}\}
\quad \text{for } x_{10} \leq x \leq x_{00}\\
&=\hat V_{20}(x)-c_{02}.
\end{flalign*}

Hence, the proof for $v_{00}(x)-M_{00}[v]>0$ is identical to that for
\[
\{c_{02}< c_{01}\} \cap \{x_{00} < x_{10}\} \cap \{x< x^*\}.
\]

For $x<x_{10}$:

$\star$ \quad \text{If } $c_{02}> c_{01}$,
\[
n(x)= v_{10}(x) - c_{01}.
\]

And the proof for $v_{00}(x)-M_{00}[v]>0$ is identical to that for
\[
\{c_{02}< c_{01}\} \cap \{x_{00} < x_{10}\} \cap \{x< x^*\}.
\]

$\star$ \quad \text{If } $c_{02}< c_{01}$, there is some $x^*$ such that:

\[
n(x)=
\begin{cases}
v_{20}(x) - c_{02}, &x < x^*,\\
\hat V_{10}(x) + A_{10}x^{m_{10}^+}- c_{01}, &x \geq x^*.
\end{cases}
\]

The proof for $v_{00}(x)-M_{00}[v]>0$ follows the same arguments as for
\[
\{c_{02}< c_{01}\} \cap \{x_{00} < x_{10}\}.
\]
\\
\noindent
Next,
\[
v_{00}(x)-N_{00}[v]
=
v_{00}(x)-\min\{v_{01}(x)+\chi_{01},\,v_{02}(x)+\chi_{02}\}.
\]

Using the same reasoning as for $v_{00}(x)-M_{00}[v]>0$, we deduce that
\[
v_{00}(x)-N_{00}[v]<0.
\]
\\
\noindent
Therefore, the QVI is satisfied on $(0,x_{00})$.
\\
\\
\noindent
$\bullet$ For $x\geq x_{00}$, we have
\[
\mathcal H[v_{00}](x)
=
\frac{K_{20}-K_{00}}{K_{00}}x^\gamma- c_{02}.
\]

A straightforward calculus shows that $\mathcal H[v_{00}](x)$ is positive.

Moreover:
\begin{flalign*}
v_{00}(x)-M_{00}[v]
&=v_{00}(x)-\max\{v_{10}(x)-c_{01},\,v_{20}(x)-c_{02}\}.
\end{flalign*}

If $x_{00} > x_{10}$:
\begin{flalign*}
v_{00}(x)-M_{00}[v]
&=\hat V_{20}(x) -c_{02}
-\max\{\hat V_{20}(x)-c_{01}-c_{12},\,\hat V_{20}(x)-c_{02}\}\\
&=c_{02}-c_{02}=0.
\end{flalign*}

\indent
And
\begin{flalign*}
v_{00}(x)-N_{00}[v]
&=v_{00}(x) - \min \{v_{01}(x) + \chi_{01}; v_{02}(x) + \chi_{02}\}\\
&=-\max \{\chi_{01},\chi_{02}\}<0.
\end{flalign*}

\noindent
If $x_{00}<x_{10}$, we only need to handle the case where $x_{00}<x<x_{10}$:
\begin{flalign*}
v_{00}(x)-M_{00}[v]
&=\hat V_{20}(x) -c_{02}
-\max\{v_{10}(x)-c_{01},\,v_{20}(x)-c_{02}\}.
\end{flalign*}

\indent
It suffices to show that, on $(x_{00}, x_{10})$,
\[
n(x)=\max\{v_{10}(x)-c_{01},\,v_{20}(x)-c_{02}\}
=v_{20}(x)-c_{02}.
\]

From above, we know that if $c_{02}<c_{01}$, we are done.

Else,
\[
n(x)=
\begin{cases}
v_{20}(x) - c_{02}, &x\geq x^*,\\
\hat V_{10}(x) + A_{10}x^{m_{10}^+}- c_{01}, &x < x^*.
\end{cases}
\]

Let's show that $x^*<x_{00}$.
\begin{flalign*}
h_{11}(x)&=v_{10}(x) - c_{01} -(v_{20}(x) - c_{02})\\
&=\hat V_{10}(x) + A_{10}x^{m_{10}^+}
-c_{01} -\hat V_{20}(x) +c_{02}\\
&=\hat V_{10}(x) + A_{10}x^{m_{10}^+}
+\hat V_{00}(x) + A_{00}x^{m_{00}^+}\\
&\quad-(\hat V_{00}(x) + A_{00}x^{m_{00}^+})
-c_{01} -\hat V_{20}(x) +c_{02}\\
&=(A_{10} -A_{00})x^{m_{00}^+}
+\hat V_{00}(x) + A_{00}x^{m_{00}^+}
-c_{01} -\hat V_{20}(x) +c_{02}.
\end{flalign*}

$x_{00}<x_{10} \implies A_{10} < A_{00}$, hence
\[
(A_{10} -A_{00}){x_{00}}^{m_{00}^+} <0.
\]

On the other hand, $f(x_{00})=-c_{01}$, where
\[
f(x)=V_{00}(x) + A_{00}x^{m_{00}^+}
-c_{01} -\hat V_{20}(x) +c_{02}.
\]

We deduce that
\[
h_{11}(x_{00})<0.
\]

Using the fact that $h_{11}$ is decreasing on $(0, x_{10})$, and $h'_{11}(x^*)=0$, we get the desired result.

Finally, for $v_{00}(x)-N_{00}[v]$, we have:
\begin{flalign*}
v_{00}(x)-N_{00}[v]
&=v_{00}(x) - \min \{v_{01}(x) + \chi_{01}; v_{02}(x) + \chi_{02}\}\\
&=-\max \{\chi_{01},\chi_{02}\}<0.
\end{flalign*}

Hence, we deduce that $v_{00}$ satisfies the QVI on $(x_{00},\infty)$, and therefore on $(0,\infty)$.

\noindent
\textbf{Step 2. Verification for $v_{20}$.}

Since
\[
v_{20}(x)=\hat V_{20}(x),
\]
\indent we immediately have
\[
\mathcal H[v_{20}](x)=0.
\]

Furthermore,
\[
v_{20}(x)-M_{20}[v]
=
v_{20}(x)
-
\max\{v_{00}(x)-c_{20},\,v_{10}(x)-c_{21}\}.
\]

\noindent
Assume that
\[
\max\{v_{00}(x)-c_{20},\,v_{10}(x)-c_{21}\}
= v_{00}(x)-c_{20}.
\]

\noindent
$\bullet$ For $x<x_{00}$:
\[
v_{20}(x)-M_{20}[v]
=\hat V_{20}(x) -\hat V_{00}(x)
-A_{00}x^{m_{00}^+}+ c_{20}.
\]

Let
\[
h_{15}(x)=\hat V_{20}(x) -\hat V_{00}(x)
-A_{00}x^{m_{00}^+}+ c_{20}.
\]

A straightforward calculus shows that $h_{15}(x)$ is increasing on $(0, x_{00})$. Given that $h_{15}(0)=c_{20}>0$, we deduce that
\[
v_{20}(x)-M_{20}[v]>0.
\]

\noindent
$\bullet$ For $x\geq x_{00}$:
\begin{align*}
v_{20}(x)-M_{20}[v]
&=v_{20}(x) -\{(v_{20}(x)- c_{02}) -c_{20}\}\\
&=c_{02}+c_{20}>0.
\end{align*}

We deduce that
\[
v_{20}(x)-M_{20}[v]>0.
\]

\noindent
Else, assume that
\[
\max\{v_{00}(x)-c_{20},\,v_{10}(x)-c_{21}\}
= v_{10}(x)-c_{21}.
\]

\noindent
For $x<x_{10}$:
\[
v_{20}(x)-M_{20}[v]
=\hat V_{20}(x) -\hat V_{10}(x)
-A_{10}x^{m_{10}^+}+ c_{21}.
\]

Let
\[
h_{16}(x)=\hat V_{20}(x) -\hat V_{10}(x)
-A_{10}x^{m_{10}^+}+ c_{21}.
\]

A straightforward calculus shows that $h_{16}(x)$ is increasing on $(0, x_{00})$. Given that $h_{16}(0)=c_{21}>0$, we deduce that
\[
v_{20}(x)-M_{20}[v]>0.
\]

\noindent
For $x \geq x_{10}$:
\begin{align*}
v_{20}(x)-M_{20}[v]
&=v_{20}(x) -\{(v_{20}(x)- c_{12}) -c_{21}\}\\
&=c_{12}+c_{21}>0.
\end{align*}

We deduce that
\[
v_{20}(x)-M_{20}[v]>0.
\]

\noindent
Finally,
\begin{flalign*}
v_{20}(x)-N_{20}[v]
&=v_{20}(x)-\min\{v_{21}(x)+\chi_{01}, v_{22}(x)+\chi_{22}\}\\
&=v_{20}(x)-v_{21}(x) - \max \{\chi_{01}, \chi_{02}\}\\
&=-\max \{\chi_{01}, \chi_{02}\}<0.
\end{flalign*}

Hence, $v_{20}$ satisfies the corresponding quasi-variational inequality on $(0, +\infty)$.

\noindent
The proofs for $v_{01}$, $v_{02}$, $v_{10}$, $v_{11}$, and $v_{12}$ are completely analogous and are therefore omitted.

The conclusion follows from Lemma~\ref{lem:lemma3}.
\hfill $\Box$
\epf
\bethe
Case where
\[
K_{ij}=Cst_1<K_{i2}=Cst_2,\qquad i=0,1,2,\quad j=0,1,
\]
\begin{align}
v_{ij}(x)&=\hat V_{ij}(x),\quad i=0,1,2,\quad j=0,1,\\
v_{i2}(x)&=\bigl(\hat V_{i2}(x)+A_{i2}x^{m_{i2}^+}\bigr)\mathbf 1_{\{x<x_{i2}\}}
+\bigl(v_{i0}(x)+\min\{\chi_{20},\chi_{21}\}\bigr)\mathbf 1_{\{x\ge x_{i2}\}},\quad i=0,1,2.
\end{align}

where $x_{i2}$ and $A_{i2}$ are defined by
\begin{equation*}
(K_{i0}-K_{2j})(x_{i2})^{\gamma}
= \frac{m_{i2}^+}{m_{i2}^+-\gamma}c_{i2},
\quad
A_{i2}=(K_{i0}-K_{2j})\frac{\gamma}{m_{i2}^+}(x_{i2})^{\gamma-m_{ij}^+}.
\end{equation*}
\eethe

Such $v_{ij}(x)$ are the candidate solutions associated with the QVI system.

With:
\[
S^\xi_{ij}=\emptyset,\quad C^\xi_{ij}=(0,+\infty),\quad
S^\eta_{ij}=\emptyset,\quad C^\eta_{ij}=(0,+\infty),\quad j=0,1,2,
\]

\[
S^\xi_{ij}=[x_{i2},\infty),\quad C^\xi_{i2}=(0,x_{i2}),\quad
S^\eta_{i2}=\emptyset,\quad C^\eta_{i2}=(0,+\infty),\quad
i=0,1,\quad j=0,1,2.
\]

\bpf

We only verify the QVI for $v_{00}$ and $v_{02}$, since the remaining cases follow by exactly the same arguments.

\noindent
\textbf{Step 1. Verification for $v_{00}$.}

Since
\[
v_{00}(x)=\hat V_{00}(x),
\]
we immediately have
\[
\mathcal H[v_{00}](x)=0.
\]

Furthermore,
\begin{flalign*}
v_{00}(x)-M_{00}[v]
&=v_{00}(x)-\max\{v_{10}(x)-c_{01},v_{20}(x)-c_{02}\}.\\
&=v_{00}(x)-v_{10}(x)+\min\{c_{01},c_{02}\}\\
&=\min\{c_{01},c_{02}\}>0.
\end{flalign*}

Finally,
\[
v_{00}(x)-N_{00}[v]
=
v_{00}(x)-\min\{v_{01}(x)+\chi_{01},v_{02}(x)+\chi_{02}\}.
\]

If the minimum is attained at
\[
v_{01}(x)+\chi_{01},
\]
then
\[
v_{00}(x)-N_{00}[v]
=
v_{00}(x)-v_{01}(x)-\chi_{01}
=
-\chi_{01}<0.
\]

Otherwise, for $x<x_{02}$,
\[
v_{00}(x)-N_{00}[v]
=
v_{00}(x)-\hat V_{02}(x)-A_{02}x^{m_{02}^+}-\chi_{02}.
\]

Define
\[
G(x)=\hat V_{00}(x)-\hat V_{02}(x)-A_{02}x^{m_{02}^+}-\chi_{02}.
\]

A direct computation shows that
\[
G''(x)>0.
\]
Hence, $G'$ is increasing. Since
\[
G'(x_{02})=0,
\]
it follows that
\[
G'(x)<0,\qquad x<x_{02}.
\]

Using
\[
G(x_{02})=-\min\{\chi_{20},\chi_{21}\}-\chi_{02}<0,
\qquad
G(0)=-\chi_{02}<0,
\]
we conclude that
\[
G(x)<0,\qquad x<x_{02},
\]
which proves
\[
v_{00}(x)-N_{00}[v]<0.
\]

For $x>x_{02}$,
\[
v_{00}(x)-N_{00}[v]
=
v_{00}(x)-V_{00}(x)-\chi_{02}
-\min\{\chi_{20},\chi_{21}\}<0.
\]

Hence, $v_{00}$ satisfies the corresponding quasi-variational inequality.

\noindent
\textbf{Step 2. Verification for $v_{02}$.}
\\

\noindent
$\bullet$ For $x<x_{02}$, by construction,
\[
\mathcal H[v_{02}](x)=0.
\]

Moreover,
\begin{flalign*}
v_{02}(x)-M_{02}[v]
&=
v_{02}(x)-\max\{v_{12}(x)-c_{01},\,v_{22}(x)-c_{02}\}.\\
&=\min\{c_{01},c_{02}\}>0.
\end{flalign*}

Next,
\begin{flalign*}
v_{02}(x)-N_{02}[v]
&=
v_{02}(x)
-\min\{v_{01}(x)+\chi_{21},\,v_{00}(x)+\chi_{20}\},\\
&=
v_{02}(x)-v_{01}(x)-\min\{\chi_{21},\chi_{20}\},\\
&=
\hat V_{02}(x)+A_{02}x^{m_{02}^+}-\hat V_{01}(x)
-\min\{\chi_{21},\chi_{20}\}.
\end{flalign*}

Let
\[
h_{17}(x)=\hat V_{02}(x)+A_{02}x^{m_{02}^+}-\hat V_{01}(x)
-\min\{\chi_{21},\chi_{20}\}.
\]

$h_{17}'(x)>0$, hence $h_{17}$ is increasing.

Using
\[
h_{17}(x_{02})=0,
\]
we deduce that
\[
v_{02}(x)-N_{02}[v]<0.
\]

Therefore, the QVI is satisfied on $(0,x_{02})$.
\\

\noindent
$\bullet$ For $x\ge x_{02}$,
\[
\mathcal H[v_{02}](x)
=
\frac{K_{00}-K_{02}}{K_{02}}x^\gamma
+\min\{\chi_{21},\chi_{20}\},
\]
which is negative by definition of the switching threshold.

Hence,
\[
\min\big[\mathcal{H}[v_{02}](x),\,v_{02}(x)-M_{02}[v](x)\big]<0.
\]

Finally,
\begin{flalign*}
v_{02}(x)-N_{02}[v]
&=v_{02}(x)-\min\{v_{01}(x)+\chi_{21},\,v_{00}(x)+\chi_{20}\}.\\
&=v_{02}(x)-v_{01}(x)-\min\{\chi_{21},\chi_{20}\}.\\
&=\hat V_{01}(x)+\min\{\chi_{21},\chi_{20}\}
-\hat V_{01}(x)-\min\{\chi_{21},\chi_{20}\}=0.
\end{flalign*}

Hence, $v_{02}$ satisfies the QVI on $(x_{02},\infty)$, and therefore on $(0,\infty)$.\hfill $\Box$

\epf

\section{A Numerical Procedure for Computing the Value Functions}

In this section, we propose a numerical procedure for computing the value functions when the qualitative structure of the switching regions is known, while the numerical values of the thresholds are not explicitly available.

\subsection{Notation and Structure of the Switching Regions}

Let us first introduce the following notation. For $i,j,k,l$ denoting the corresponding regimes, define
\begin{align*}
S^{\xi,j}_{ik}
&:= \text{the region where Player I switches from regime $i$ to regime $k$,while Player II's current regime is $j$,}\\
S^{\eta,i}_{jl}
&:= \text{the region where Player II switches from regime $j$ to regime $l$, while Player I's current regime is $i$.}
\end{align*}

To illustrate our approach, we restrict attention to the particular case in which $(\mu_0,\nu_0)=(0,0)$ and the switching regions have the following threshold structure:
\begin{align}
S^{\xi,0}_{01}
&=(0,x^{0}_{01}],&
S^{\xi,0}_{10}
&=[x^{0}_{10},+\infty),&
\\
S^{\eta,0}_{01}
&=(0,y^{0}_{01}],&
S^{\eta,0}_{12}
&=[y^{0}_{12},+\infty).
\end{align}

All other switching regions are assumed to be empty.

Without loss of generality, we assume that the thresholds satisfy
\begin{equation}
x^{0}_{01}
<
y^{0}_{01}
<
x^{0}_{10}
<
y^{0}_{12}.
\end{equation}

\subsection{Auxiliary Functionals}

For $a,b,x>0$, let
\begin{align}
R_1(x,a)
&:= \mathbb{E}\left[e^{-r\tau_a}\right],\\
R_2(x,a,b)
&:= \mathbb{E}\left[e^{-r\tau_{ab}}\right],\\
R_3(x,a,b)
&:= \mathbb{E}\left[
e^{-r\tau_a}\mathbbm{1}_{\{\tau_a<\tau_b\}}
\right].
\end{align}
Where $\tau_a$ and $\tau_b$ denote the corresponding hitting times.

We also introduce the following expected discounted running-payoff functionals:
\begin{align}
F_1(a)
&:= \mathbb{E}\left[
\int_0^{\tau_a} e^{-rs}f(X_s^x)\,ds
\right],\\
F_2(a,b)
&:= \mathbb{E}\left[
\int_{\tau_a}^{\tau_b}e^{-rs}f(X_s^x)\,ds
\right],\\
F_3(a,b)
&:= \mathbb{E}\left[
\int_0^{\tau_a\wedge\tau_b}
e^{-rs}f(X_s^x)\,ds
\right].
\end{align}

The computation of these expectation functionals can be found in the Appendix of \cite{PVT}.

\subsection{Case $x<x^{0}_{01}$}

Assume that
\[
x<x^{0}_{01}.
\]

In this case, the optimal strategy can be described as follows:

\paragraph{Initial regime $(\mu_0,\nu_0)=(0,0)$.}

Both players immediately switch from regime $0$ to regime $1$, paying the corresponding switching costs $c_{01}$ and $\chi_{01}$, respectively. The problem then reduces to the determination of the optimal strategy associated with $J_{11}$. But there is no switching in the state $(\mu_0,\nu_0)=(1,1)$, so the optimal strategy is known.

\[
J_{00}(x)=-c_{01}+\chi_{01}+J_{11}(x),
\qquad
\text{and then}
\qquad
v_{00}(x)=-c_{01}+\chi_{01}+\hat V_{11}(x).
\]

\subsection{Case $x^{0}_{01}<x<y^{0}_{01}$}

Assume that
\[
x^{0}_{01}<x<y^{0}_{01}.
\]

In this case, the optimal strategy can be described as follows.

\paragraph{Initial regime $(\mu_0,\nu_0)=(0,0)$.}

Player 2 has to immediately switch to regime $1$, paying the corresponding switching cost $\chi_{01}$. Then, he lets the process diffuse until it hits $y^0_{12}$ at $\tau^0_{y_{12}}$; he then switches to regime $2$, paying $\chi_{12}$. But there is no switching in the state $(\mu_0,\nu_0)=(0,2)$, so the optimal strategy is known.

\[
J_{00}(x)=\mathbb{E}\bigg[
\integ{0}{\tau^0_{y_{12}}}e^{-rt}f(X^x_t)\,dt
+
e^{-r\tau^0_{y_{12}}}
\bigg(
\chi_{12}+J_{02}(x)
\bigg)
\bigg].
\]

There is no switching in the state $(\mu_0,\nu_0)=(0,2)$, so
$J_{02}(x)=\hat V_{02}(x)$.

We get
\[
v_{00}(x,y^{0}_{12})
=
\chi_{01}
+
F_1(y^{0}_{12})
+
\big(
\chi_{12}+J_{02}(y^{0}_{12})
\big)
R_1(x,y^{0}_{12}).
\]

Player 2 aims at minimizing the payoff, so
\[
v_{00}(x)
=
\min_{y^{0}_{12}}
\bigg[
\chi_{01}
+
F_1(y^{0}_{12})
+
\big(
\chi_{12}+J_{02}(y^{0}_{12})
\big)
R_1(x,y^{0}_{12})
\bigg].
\]

\subsection{Case $y^{0}_{01}<x<x^{0}_{10}$}

Assume that
\[
y^{0}_{01}<x<x^{0}_{10}.
\]

In this case, the optimal strategy can be described as follows.

\paragraph{Initial regime $(\mu_0,\nu_0)=(0,0)$.}

Let the process diffuse until it hits $y^0_{01}$ at $\tau^0_{y_{01}}$. Then, Player 2 switches to regime $1$, paying the corresponding switching cost $\chi_{01}$. Then, he lets the process diffuse until it hits $y^0_{12}$ at $\tau^0_{y_{12}}$; he then switches to regime $2$, paying $\chi_{12}$. But there is no switching in the state $(\mu_0,\nu_0)=(0,2)$, so the optimal strategy is known.

\[
J_{00}(x)=\mathbb{E}\bigg[
\integ{0}{\tau^0_{y_{01}}}e^{-rt}f(X^x_t)\,dt
+
e^{-r\tau^0_{y_{01}}}
\bigg(
\chi_{01}+J_{01}(y^0_{01})
\bigg)
\bigg].
\]

\[
J_{01}(y^0_{01})
=
\mathbb{E}\bigg[
\integ{0}{\tau^0_{y_{12}}}e^{-rt}f(X^x_t)\,dt
+
e^{-r\tau^0_{y_{12}}}
\bigg(
\chi_{12}+J_{02}(y^0_{12})
\bigg)
\bigg].
\]

Thus, the computation of $J_{01}$ is straightforward. Once we know $J_{01}$, we also know $J_{00}$.

\subsection{Case $x^{0}_{10}<x<y^{0}_{12}$}

Assume that
\[
x^{0}_{10}<x<y^{0}_{12}.
\]

In this case, the optimal strategy can be described as follows.

\paragraph{Initial regime $(\mu_0,\nu_0)=(0,0)$.}

Let the process diffuse until it hits $y^0_{01}$ at $\tau^0_{y_{01}}$. Then, Player 2 switches to regime $1$, paying the corresponding switching cost $\chi_{01}$. Then, he lets the process diffuse until it hits $y^0_{12}$ at $\tau^0_{y_{12}}$; he then switches to regime $2$, paying $\chi_{12}$. But there is no switching in the state $(\mu_0,\nu_0)=(0,2)$, so the optimal strategy is known.

\[
J_{00}(x)=\mathbb{E}\bigg[
\integ{0}{\tau^0_{y_{01}}}e^{-rt}f(X^x_t)\,dt
+
e^{-r\tau^0_{y_{01}}}
\bigg(
\chi_{01}+J_{01}(y^0_{01})
\bigg)
\bigg].
\]

\[
J_{01}(y^0_{01})
=
\mathbb{E}\bigg[
\integ{0}{\tau^0_{y_{12}}}e^{-rt}f(X^x_t)\,dt
+
e^{-r\tau^0_{y_{12}}}
\bigg(
\chi_{12}+J_{02}(y^0_{12})
\bigg)
\bigg].
\]

\subsection{Case $x>y^{0}_{12}$}

Assume that
\[
x>y^{0}_{12}.
\]

In this case, the optimal strategy can be described as follows.

\paragraph{Initial regime $(\mu_0,\nu_0)=(0,0)$.}

Let the process diffuse until it hits $y^0_{01}$ at $\tau^0_{y_{01}}$. Then, Player 2 switches to regime $1$, paying the corresponding switching cost $\chi_{01}$. Then, he lets the process diffuse until it hits $y^0_{12}$ at $\tau^0_{y_{12}}$; he then switches to regime $2$, paying $\chi_{12}$. But there is no switching in the state $(\mu_0,\nu_0)=(0,2)$, so the optimal strategy is known.

\[
J_{00}(x)=\mathbb{E}\bigg[
\integ{0}{\tau^0_{y_{01}}}e^{-rt}f(X^x_t)\,dt
+
e^{-r\tau^0_{y_{01}}}
\bigg(
\chi_{01}+J_{01}(y^0_{01})
\bigg)
\bigg].
\]

\[
J_{01}(y^0_{01})
=
\mathbb{E}\bigg[
\integ{0}{\tau^0_{y_{12}}}e^{-rt}f(X^x_t)\,dt
+
e^{-r\tau^0_{y_{12}}}
\bigg(
\chi_{12}+J_{02}(y^0_{12})
\bigg)
\bigg].
\]

\clearpage

\section{Graphic Illustration}

In this section, we shall provide illustrations of the players' strategies. We rely on the case where
$K_{ij}=Cst_1<K_{i2}=Cst_2,\quad i=0,1,2,\quad j=0,1$.
We show illustrations for $v_{02}$ and $v_{12}$. Suppose that the current regime of Player II is $R_2$, i.e., $\nu_0=2$, and the current regime of Player I is $R_0$, i.e., $\mu_0=0$, or $R_1$, i.e., $\mu_0=1$. The strategy of Player II depends on the sign of $\chi_{20}-\chi_{21}$, while the strategy of Player I is not to switch at all.

\begin{figure}[ht]
\centering

\begin{minipage}{0.48\textwidth}
\centering
\resizebox{\linewidth}{!}{%
\begin{tikzpicture}[>=Stealth,x=1cm,y=1cm,scale=0.8,transform shape]

\begin{scope}
    \draw[->] (0,0) -- (10,0);
    \draw[->] (0,0) -- (0,4) node[above] {$x$};

    \draw[blue,very thick] (0.5,0.3) -- (9.2,3.5);
    \draw[densely dashed] (0,2) -- (5,2);

    \draw[densely dashed] (5,0) -- (5,2);

    \node[left] at (0,2) {$x_{02}$};
\end{scope}

\begin{scope}[yshift=-3cm]

    \draw[densely dashed] (0,0) -- (10,0);

    \draw[densely dashed] (5,-0.8) -- (5,0.8);

    \draw[densely dashed] (0,0.8) -- (10,0.8) node[right] {$R_0$};
    \draw[red,very thick] (0,0.8) -- (10,0.8);

    \draw[densely dashed] (0,0) -- (10,0) node[right] {$R_1$};

    \draw[densely dashed] (0,-0.8) -- (10,-0.8) node[right] {$R_2$};

    \node[left] at (0,0) {Player 1};

\end{scope}

\begin{scope}[yshift=-6cm]

    \draw[densely dashed] (0,0) -- (10,0);

    \draw[green!60!black,very thick] (5,-0.8) -- (5,0.8);

    \draw[densely dashed] (0,-0.8) -- (10,-0.8) node[right] {$R_2$};
    \draw[green!60!black,very thick] (0,-0.8) -- (5,-0.8);
    \draw[green!60!black,very thick] (5,0.8) -- (10,0.8);

    \draw[densely dashed] (0,0.8) -- (10,0.8) node[right] {$R_0$};
    \draw[densely dashed] (6,0) -- (10,0) node[right] {$R_1$};

    \node[left] at (0,0) {Player 2};

\end{scope}

\end{tikzpicture}%
}
\caption{$\mu_0=0,\ \nu_0=2,\ \chi_{20}-\chi_{21}<0$}
\end{minipage}
\hfill
\begin{minipage}{0.48\textwidth}
\centering
\resizebox{\linewidth}{!}{%
\begin{tikzpicture}[
    >=Stealth,
    x=1cm,
    y=1cm,
    scale=0.8,
    transform shape
]

\begin{scope}
    \draw[->] (0,0) -- (10,0);
    \draw[->] (0,0) -- (0,4) node[above] {$x$};

    \node[left] at (0,2.4) {$x_{02}$};

    \draw[blue,very thick] (0.5,0.3) -- (9.2,3.5);

    \draw[densely dashed] (0,2.4) -- (6,2.4);
    \draw[densely dashed] (6,0) -- (6,2.4);

\end{scope}

\begin{scope}[yshift=-3cm]

    \draw[densely dashed] (0,0) -- (10,0);

    \draw[densely dashed] (6,-0.8) -- (6,0.8);

    \draw[densely dashed] (0,0.8) -- (10,0.8)
        node[right] {$R_0$};
    \draw[red,very thick] (0,0.8) -- (10,0.8);

    \draw[densely dashed] (0,0) -- (10,0)
        node[right] {$R_1$};

    \draw[densely dashed] (0,-0.8) -- (10,-0.8)
        node[right] {$R_2$};

    \node[left] at (0,0) {Player 1};

\end{scope}

\begin{scope}[yshift=-6cm]

    \draw[densely dashed] (0,0) -- (10,0);

    \draw[green!60!black,very thick] (6,-0.80) -- (6,0);

    \draw[densely dashed] (0,-0.8) -- (10,-0.8)
        node[right] {$R_2$};

    \draw[green!60!black,very thick] (0,-0.8) -- (6,-0.8);
    \draw[green!60!black,very thick] (6,0) -- (10,0);

    \draw[densely dashed] (0,0.8) -- (10,0.8)
        node[right] {$R_0$};

    \draw[densely dashed] (6,0) -- (10,0)
        node[right] {$R_1$};

    \node[left] at (0,0) {Player 2};

\end{scope}

\end{tikzpicture}%
}
\caption{$\mu_0=0,\ \nu_0=2,\ \chi_{20}-\chi_{21}>0$}
\end{minipage}

\end{figure}

\begin{figure}[ht]
\centering

\begin{minipage}{0.48\textwidth}
\centering
\resizebox{\linewidth}{!}{%
\begin{tikzpicture}[>=Stealth,x=1cm,y=1cm,scale=0.8,transform shape]

\begin{scope}
    \draw[->] (0,0) -- (10,0);
    \draw[->] (0,0) -- (0,4) node[above] {$x$};

    \draw[blue,very thick] (0.5,0.3) -- (9.2,3.5);
    \draw[densely dashed] (0,2) -- (5,2);

    \draw[densely dashed] (5,0) -- (5,2);

    \node[left] at (0,2) {$x_{12}$};
\end{scope}

\begin{scope}[yshift=-3cm]

    \draw[densely dashed] (0,0) -- (10,0);

    \draw[densely dashed] (5,-0.8) -- (5,0.8);

    \draw[densely dashed] (0,0.8) -- (10,0.8) node[right] {$R_0$};
    \draw[red,very thick] (0,0) -- (10,0);

    \draw[densely dashed] (0,0) -- (10,0) node[right] {$R_1$};

    \draw[densely dashed] (0,-0.8) -- (10,-0.8) node[right] {$R_2$};

    \node[left] at (0,0) {Player 1};

\end{scope}

\begin{scope}[yshift=-6cm]

    \draw[densely dashed] (0,0) -- (10,0);

    \draw[green!60!black,very thick] (5,-0.8) -- (5,0.8);

    \draw[densely dashed] (0,-0.8) -- (10,-0.8) node[right] {$R_2$};
    \draw[green!60!black,very thick] (0,-0.8) -- (5,-0.8);
    \draw[green!60!black,very thick] (5,0.8) -- (10,0.8);

    \draw[densely dashed] (0,0.8) -- (10,0.8) node[right] {$R_0$};
    \draw[densely dashed] (6,0) -- (10,0) node[right] {$R_1$};

    \node[left] at (0,0) {Player 2};

\end{scope}
\end{tikzpicture}%
}
\caption{$\mu_0=1,\ \nu_0=2,\ \chi_{20}-\chi_{21}<0$}
\end{minipage}
\hfill
\begin{minipage}{0.48\textwidth}
\centering
\resizebox{\linewidth}{!}{%
\begin{tikzpicture}[>=Stealth,x=1cm,y=1cm,scale=0.8,transform shape]

\begin{scope}
    \draw[->] (0,0) -- (10,0);
    \draw[->] (0,0) -- (0,4) node[above] {$x$};

    \node[left] at (0,2.4) {$x_{12}$};

    \draw[blue,very thick] (0.5,0.3) -- (9.2,3.5);
    \draw[densely dashed] (0,2.4) -- (6,2.4);

    \draw[densely dashed] (6,0) -- (6,2.4);

\end{scope}

\begin{scope}[yshift=-3cm]

    \draw[densely dashed] (0,0) -- (10,0);

    \draw[densely dashed] (6,-0.8) -- (6,0.8);

    \draw[densely dashed] (0,0.8) -- (10,0.8) node[right] {$R_0$};
    \draw[red,very thick] (0,0) -- (10,0);

    \draw[densely dashed] (0,0) -- (10,0) node[right] {$R_1$};

    \draw[densely dashed] (0,-0.8) -- (10,-0.8) node[right] {$R_2$};

    \node[left] at (0,0) {Player 1};

\end{scope}

\begin{scope}[yshift=-6cm]

    \draw[densely dashed] (0,0) -- (10,0);

    \draw[green!60!black,very thick] (6,-0.80) -- (6,0);

    \draw[densely dashed] (0,-0.8) -- (10,-0.8) node[right] {$R_2$};
    \draw[green!60!black,very thick] (0,-0.8) -- (6,-0.8);
    \draw[green!60!black,very thick] (6,0) -- (10,0);

    \draw[densely dashed] (0,0.8) -- (10,0.8) node[right] {$R_0$};
    \draw[densely dashed] (6,0) -- (10,0) node[right] {$R_1$};

    \node[left] at (0,0) {Player 2};

\end{scope}

\end{tikzpicture}%
}
\caption{$\mu_0=1,\ \nu_0=2,\ \chi_{20}-\chi_{21}>0$}
\end{minipage}

\end{figure}

\clearpage

\section{Conclusion}

In this paper, we have investigated a two-player zero-sum stochastic differential game with three regimes, focusing on the impact of regime switching on the value function and the optimal strategies of the players. By formulating the problem in terms of the associated Hamilton--Jacobi--Bellman--Isaacs system, we have provided a framework for analyzing the interactions between the different regimes and the resulting switching decisions.

Under suitable assumptions on the model coefficients and switching structure, we have characterized the value function using the viscosity solutions approach and derived explicit representations for a number of relevant configurations. The analysis also provides a qualitative description of the optimal value function in the case where we know the structure of the switching regions without knowing the switching thresholds explicitly. In particular, the results illustrate how the strategic interaction between the two players and the possibility of moving between regimes jointly determine the structure of the optimal policies.

The three-regime setting considered in this work provides a useful framework that captures a richer class of switching mechanisms than the standard two-regime case, while still allowing for an explicit analysis of several configurations. The results obtained here can therefore serve as a basis for studying more general regime-switching stochastic games.

Several directions for future research are possible. One natural extension would be to consider a larger number of regimes and more general switching structures, for which the characterization of the switching regions may become substantially more involved. Another interesting direction would be to develop numerical procedures for computing the switching thresholds when explicit expressions are no longer available. Finally, it would be of interest to investigate extensions involving asymmetric information, non-zero-sum interactions, or regime-dependent diffusion coefficients and switching costs.

\no \textbf{Declarations}

\no \textbf{Conflict of interest.} The authors have not disclosed any competing interests.\\

\end{document}